\voffset=0.0in
\hoffset=.2in
\documentclass[12pt]{article}
\usepackage{amssymb,amsmath}
\setlength{\oddsidemargin}{.0in}
\setlength{\evensidemargin}{.0in}
\setlength{ \textheight}{8.5in}
\setlength{ \textwidth}{6.6in}
\newcommand{\qed}{\hfill \rule{2.5mm}{2.5mm}}
\newcommand{\R}{{\mathbb R}}

\newcommand{\N}{{\mathbb N}}

\newcommand{\proof}{{\em Proof:\ }}

\begin{document}
\newtheorem{thm}{Theorem}[section]
\newtheorem{defs}[thm]{Definition}
\newtheorem{lem}[thm]{Lemma}
\newtheorem{rem}[thm]{Remark}
\newtheorem{cor}[thm]{Corollary}
\newtheorem{prop}[thm]{Proposition}
\renewcommand{\theequation}{\arabic{section}.\arabic{equation}}
\newcommand{\newsection}[1]{\setcounter{equation}{0} \section{#1}}
\title{Mixing inequalities in Riesz spaces
      \footnote{{\bf Keywords:} Riesz spaces, vector lattices; mixing processes, dependent processes, conditional expectation operators.\
      {\em Mathematics subject classification (2000):} 47B60, 60G20.}}
\author{
{Wen-Chi Kuo}\footnote{Supported in part by NRF grant number CSUR160503163733.}\\
School of Computational and Applied Mathematics\\
University of the Witwatersrand\\
Private Bag 3, P O WITS 2050, South Africa\\ \\
{Michael J. Rogans}\\
School of Statistics and Actuarial Science\\
University of the Witwatersrand\\
Private Bag 3, P O WITS 2050, South Africa\\ \\
{ Bruce A. Watson} \footnote{Supported in part by the Centre for Applicable Analysis and
Number Theory and by NRF grant number IFR2011032400120.}\\
School of Mathematics\\
University of the Witwatersrand\\
Private Bag 3, P O WITS 2050, South Africa}
\maketitle
\abstract{\noindent
Various topics in stochastic processes have been considered in the abstract setting of Riesz spaces, for example martingales, martingale convergence, ergodic theory, AMARTS, Markov processes and mixingales. Here we continue the relaxation of conditional independence begun in the study of mixingales and study mixing processes. The two mixing coefficients which will be considered are
the $\alpha$ (strong) and $\varphi$ (uniform) mixing coefficients.  We conclude with mixing
inequalities for these types of processes. In order to facilitate this development, the study
of generalized $L^1$ and $L^\infty$ spaces begun by Kuo, Labuschagne and Watson will be 
extended.
}
\parindent=0in
\parskip=.2in
\newsection{Introduction}

Mixing processes are stochastic processes in which independence assumptions are replaced by a measure of independence called the mixing coefficient, see \cite{bill, mix1, edgar-s, mix3} for measure theoretic essentials of mixing processes.  
In the Riesz space (measure free) setting, processes which require independence, such as Markov 
processes, were considered with independence replaced by conditional independence, see
\cite{markov}.
In line with the above approach, mixingales (processes with independence/conditional independence in the limit) were
considered in the Riesz space setting in \cite{mix}.
In this work we will pose $\alpha$ (strong) and $\varphi$ (uniform) mixing processes in the Riesz space setting.
Core to the theory of mixing is the family of inequalities generally referred to as the mixing inequality, the conditional Riesz space analogues of  which form the focus of this paper, see Section~\ref{section-inequality}. We will give mixing inequalities for both $\alpha$ and $\varphi$
mixing (the one being an easy consequence of the other). We refer the reader to
\cite{bill, edgar-s, ibragimov, mix3, serfling}
 for the
measure, non-conditional analogues.
To facilitate this study the Riesz space analogues of the $L^p$ spaces introduced in \cite{stochint} will be revisited.

In \cite{conditional}, it was shown that a conditional expectation operator, $T$, on a Riesz space, $E$, admits a unique maximal extension to a conditional expectation operator, also denoted $T$, in the universal completion, $E^u$, of $E$, with domain a Dedekind complete Riesz space, which will be denoted $L^1(T)$. The procedure used there was based on that of de Pagter and Grobler, \cite{ multiplication operators}, for the measure theoretic setting. We observe here that the range of the maximal extension of the conditional expectation operator, i.e. $R(T):=\{Tf\,|\,f\in L^1(T)\}$, is a universally complete $f$-algebra and that $L^1(T)$ is an $R(T)$-module. This prompts the definition of an $R(T)$ (vector valued) norm $\|\cdot\|_{T,1}:=T|\cdot|$ on $L^1(T)$. 
Here the homogeneity is with respect to multiplication by elements of $R(T)_+$.
Following in a similar manner $L^\infty(T)$ is taken to
be the subspace of $L^1(T)$ composed of $R(T)$ bounded elements. An $R(T)$ valued norm $\|\cdot\|_{T,\infty}:=\inf\{g\in R(T)_+\,|\,|\cdot|\le g\},$ 
is defined on $L^\infty(T)$.
This extends on the concepts of $L^\infty(T)$ defined in  \cite{stochint}.
 
 In \cite[Sections 39, 42 and 43]{D-M-B}
 Dellacherie and Meyer gave an extension of martingale theory to $\sigma$-finite processes.
As a direct application of the material presented in Sections 2 to 4, we give, in Section 5, an extension of
the theory of mixing theory to $\sigma$-finite processes. The extension of mixing theory even to this special case, to the knowledge of the authors,
 has not been considered in the literature. 
 
 Natural connections with the theory presented here are to laws of large numbers and other convergence theorems.  For the conditionally independent case in Riesz spaces we refer the reader to Stoica \cite{stoica}. 
 
\section{Preliminaries}

For general background on Riesz spaces we refer the reader to \cite{zaanen} and for the foundations
of stochastic processes in Riesz spaces to \cite{discrete, conditional, convergence, mix, stochint}. We will present only the essential results from the theory of Riesz space stochastic processes required for our consideration of mixing processes.

\begin{defs}
Let $E$ be a Dedekind complete Riesz space with weak order unit. A positive order continuous linear projection $T$ on $E$ with range $R(T)$, a Dedekind complete Riesz subspace of $E$, is said to be a conditional expectation operator
if $Te$ is a weak order unit of $E$ for each weak order unit $e$ of $E$.
\end{defs}

A Riesz space $E$ is said to be universally complete
if $E$ is Dedekind complete and every subset of $E$ which consists of mutually disjoint elements has a supremum in $E$.
A Riesz space $E^{u}$ is said to be a universal completion
of the Riesz space $E$ if $E^{u}$ is universally complete and contains $E$ as an order dense subspace. 
If $e$ is a weak order unit of $E$ then $e$ is also a weak order unit of $E^{u}$, see \cite{zaanen}.

We say that a conditional expectation operator, $T$, on a Riesz space is strictly positive if $T|f|=0$ implies that $f=0$.
As shown in \cite{conditional}, a strictly positive conditional expectation operator, $T$, on a  Riesz space can be extended to its so called natural domain (maximal domain to which it can be extended as a conditional expectation operator) denoted $\mbox{dom}(T)$. We set $L^1(T):=\mbox{dom}(T)$ and we denote the extension of $T$ to $L^1(T)$ again by $T$. This is consistent with the special case of $T$ an expectation operator in the measure theoretic setting, see \cite{multiplication operators}. 
In particular, if $E$ is a Dedekind complete
Riesz space with weak order unit and conditional expectation operator, $T$, we say that $E$ is $T$-universally complete if $E=L^1(T)$. From the definition of $\mbox{dom}(T)$, $E$ is 
$T$-universally complete if and only
 if for each upwards directed net $(f_{\alpha})_{\alpha \in \Lambda}$ in $E_{+}$ such that $(Tf_{\alpha})_{\alpha \in \Lambda}$ is order bounded in $E^{u}$, we have that $(f_{\alpha})_{\alpha \in \Lambda}$ is order convergent in $E$. 

If $e$ is a weak order unit of $E$ then we denote the $f$-algebra of $e$ bounded elements by $$E^e:=\{f\in E\,|\, |f|\le ke,\, \mbox{ for some }\, k\in \R_+\}$$
and set
$$L^\infty(T):=\{f\in L^1(T)\,|\,|f|\le g, \mbox{ for some } g\in R(T)_+\}.$$
We recall from \cite[Theorem 5.3]{conditional} that each conditional expectation operator $T$ is an averaging operator in the sense that if $f\in R(T)$ 
and $g\in E$ with $fg\in E$ then $T(fg)=fT(g)$.

\begin{thm}
 \label{R(T) x R(T) = R(T)}
 Let $E$ be a $T$-universally complete Riesz space, where $T$ is a conditional expectation
  operator on 
  $E$, and let $e$ be a weak order unit for $E$ with $Te=e$.
  Then $R(T)$ is universally complete and hence an $f$-algebra. In addition $E=L^1(T)$ and $L^\infty(T)$ are $R(T)$-modules.
\end{thm}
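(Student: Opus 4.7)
The plan is to split the statement into four parts: (a) $R(T)$ is universally complete; (b) it is therefore an $f$-algebra; (c) $E$ is an $R(T)$-module; (d) $L^\infty(T)$ is an $R(T)$-module.

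For (a), I would take a pairwise disjoint family $\{f_\lambda\}_{\lambda\in\Lambda}\subset R(T)_+$ and consider the upward directed net of finite partial sums $g_F=\sum_{\lambda\in F}f_\lambda$, indexed by finite $F\subset\Lambda$. Each $g_F$ lies in $R(T)$, so $Tg_F=g_F$. Letting $f=\sup_F g_F$ in the universal completion $E^u$, the net $(Tg_F)=(g_F)$ is order bounded by $f$ in $E^u$, so the characterization of $T$-universal completeness of $E$ forces $g_F$ to order converge in $E$, necessarily to $f$. Order continuity of $T$ then gives $Tf=\sup_F Tg_F=f$, so $f\in R(T)$. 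Step (b) is then immediate: since $Te=e$, $e\in R(T)$, and $e$ remains a weak order unit of the Riesz subspace $R(T)$, so the standard fact that a universally complete Riesz space with weak order unit carries a unique $f$-algebra structure with that unit as multiplicative identity (see \cite{zaanen}) applies.

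Step (c) is the substantive part. Regarding $E^u$ as an $f$-algebra with identity $e$, I would define the candidate product $r\cdot f$ for $r\in R(T)$ and $f\in E$ inside $E^u$, and bootstrap $r\cdot f\in E$ by truncation. Assume $r,f\ge 0$ and set $r_n:=r\wedge(ne)\in R(T)$, so $r_n\uparrow r$ with $r_n\le ne$. The bound $0\le r_n f\le nf$ in $E^u$, together with the ideal property of $E=L^1(T)$ inside $E^u$ (from \cite{conditional}), puts $r_n f\in E$. The averaging identity then yields $T(r_n f)=r_n\cdot Tf$, and as $r_n\uparrow r$ this is order bounded above by $r\cdot Tf$ in $E^u$. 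Invoking $T$-universal completeness a second time, the upward net $(r_n f)$ order converges in $E$, with limit necessarily $r\cdot f$, so $r\cdot f\in E$. Step (d) is then a quick consequence: if $|f|\le g$ with $g\in R(T)_+$ and $r\in R(T)$, then $|r\cdot f|\le|r|g$, and $|r|g\in R(T)_+$ by the $f$-algebra structure just established in (b), so $r\cdot f\in L^\infty(T)$.

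The main obstacle is Step (c): everything else is either a disjoint-sum computation or an immediate consequence of a known theorem. The difficulty is that the product of an unbounded $r\in R(T)$ with $f\in E$ is a priori only defined in $E^u$, so one must use truncation to reduce to the $e$-bounded case, exploit the ideal property of $L^1(T)$ to land the truncated products back in $E$, and then combine the averaging identity with $T$-universal completeness to pass to the limit without leaving $E$.
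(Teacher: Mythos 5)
Your proposal is correct and follows essentially the same route as the paper: you establish universal completeness of $R(T)$ by approximating the disjoint supremum from below by elements of $R(T)$ already in $E$ and invoking the characterization of $T$-universal completeness together with order continuity of $T$, and you obtain the module structures by truncation, the averaging identity $T(rf)=rTf$, and a second appeal to $T$-universal completeness. The only cosmetic differences are that you approximate by finite partial sums where the paper truncates with $ne$ (using Dedekind completeness of $R(T)$), and in the module step you truncate the $R(T)$ factor where the paper truncates the $L^1(T)$ factor; both variants work.
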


\begin{proof}
In order to show that $R(T)$ is universally complete, we need to show that for each
$W\subset R(T)_+$ consisting of mutually disjoint elements, i.e. if $u,v\in W$ with $u\ne v$ then
$u\wedge v =0$, we have that $w:=\vee_{v\in W}v\in R(T)$. To this end, let $W$ be as above
and set $w:=\vee_{v\in W}v$, which exists in $E^u$.
Now $u\wedge ne\in R(T)_+$ and $u\wedge ne\le ne$, for all $n\in\N, u\in W$.
Here $\{u\wedge ne\,|\, u\in W\}\subset R(T)$ is bounded above by  $ne\in R(T)$ and so the Dedekind completeness of $R(T)$ allows us to conclude that 
$$w\wedge ne=\vee_{u\in W} (u\wedge ne) \in R(T).$$    
Now $w\wedge ne\uparrow w$ and 
$$T(w\wedge ne)=w\wedge ne\le w\in E^u, n\in \N,$$
so the $T$-universal completeness of $E$ gives that $w\in E$.
The order continuity of $T$ gives $Tw=w$ and $w\in R(T)$.
Thus $R(T)^u=R(T)$ from which it follows that $R(T)$ is an $f$-algebra.
 
Since  $fg=f^+g^++f^-g^--f^-g^+-f^+g^-$, to 
show that $L^1(T)$ is an $R(T)$-module, it suffices to show that
$fg\in L^1(T)$ for each $f\in L^1(T)_+$ and $g\in R(T)_+$.
Now from the averaging properties of $T$ and as $(f\wedge ne)g\in L^1(T)$ we have that
$$T((f\wedge ne)g)=T(f\wedge ne)g\le T(f)g\in E^u_+$$
for all $n\in\N$. However $L^1(T)$ is T-universally complete and 
$(f\wedge ne)g\uparrow fg$, so $fg\in L^1(T)$.
 
Finally, to show that $L^\infty(T)$ is an $R(T)$-module we, take
$f\in L^\infty(T)_+$ and $g\in R(T)_+$.
Here there is $F\in R(T)$ with  $0\le f\le F$. Now 
 $$fg\le Fg\in R(T)_+$$
 as $R(T)$ is an algebra. Hence $fg\in L^\infty(T)$.
  \qed
\end{proof}

We note the connection with the work of Grobler and de Pagter in \cite{multiplication operators}, where in the measure theoretic case, it is shown that the range space of the maximal extension of a classical conditional expectation operator is an algebra.

\section{The $L^p(T), p=1,\infty,$ spaces with $T$-norms}

In the previous section we define the $L^p(T), p=1,\infty,$ spaces, see also \cite{conditional, stochint}. Here we present the corresponding vector valued generalizations of the $L^p$-norms. In particular, these norms take their values in the positive cone of the universally complete algebra $R(T)$. 
We also refer the reader to the Riesz semi-norm approach used by Grobler and Labuschagne, \cite{GL-2}, to study the space $L^2(T)$.
For some recent progress in this area we refer the reader to \cite{C-V}.
  
\begin{defs}\label{def-norm}
 Let $E$ be a Dedekind complete Riesz space with weak order unit and $T$ be a strictly positive
  conditional expectation operator on $E$. If $E$ is an $R(T)$-module and
   $\phi : E\to R(T)_+$  with
   \begin{description}
    \item[(a)] $\phi(f)=0$ if and only if $f=0$,
    \item[(b)] $\phi(gf)=|g|\phi(f)$ for all $f\in E$ and $g\in R(T)$,
    \item[(c)] $\phi(f+h)\le \phi(f)+\phi(h)$ for all $f,h\in E$,
   \end{description}
 then $\phi$ will be called an $R(T)$-valued norm on $E$.
 \end{defs}

\begin{thm}
 \label{norm}
 Let $E$ be a $T$-universally complete Riesz space with weak order unit, 
  where $T$ is a strictly positive conditional expectation operator on $E$.
  The map $$f\mapsto T|f|=:\|f\|_{T,1}$$ defines an $R(T)$-valued norm on $L^1(T)$,  and the map
   $$f\mapsto\| f \|_{T,\infty} = \inf{\left\{ g \in R(T)_+ \, \middle\vert \, |f|  \le g \right\}}$$ 
   defines an $R(T)$-valued norm on $L^\infty(T)$.
   \end{thm}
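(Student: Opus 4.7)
My plan is to verify conditions (a)--(c) of Definition \ref{def-norm} for each of the two candidate norms in turn. The $L^1$ case amounts to a restatement of known properties of $T$; the subtlety lies in the homogeneity condition (b) for $\|\cdot\|_{T,\infty}$.

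For $\|\cdot\|_{T,1}=T|\cdot|$ on $L^1(T)$, the map takes values in $R(T)_+$ because $T$ is positive with range $R(T)$. Condition (a) is precisely the strict positivity hypothesis on $T$. Condition (b) is the averaging identity $T(|g||f|)=|g|T|f|$ for $|g|\in R(T)$ and $f\in L^1(T)$, applicable because $|g||f|\in L^1(T)$ by the $R(T)$-module property established in Theorem \ref{R(T) x R(T) = R(T)}. Condition (c) follows from $|f+h|\le|f|+|h|$ together with the linearity and positivity of $T$.

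For $\|\cdot\|_{T,\infty}$ on $L^\infty(T)$, I will first show that the infimum defining $\|f\|_{T,\infty}$ exists in $R(T)_+$ and dominates $|f|$. The set $S_f:=\{g\in R(T)_+\,|\,|f|\le g\}$ is non-empty by the definition of $L^\infty(T)$ and closed under finite meets in $R(T)$, so it is downward directed. Dedekind completeness of $R(T)$, obtained in Theorem \ref{R(T) x R(T) = R(T)}, then produces $\|f\|_{T,\infty}\in R(T)_+$. Since $R(T)$ is order closed in $L^1(T)$ (the order continuity of $T$ together with $Tg=g$ on $R(T)$ force any order limit of $R(T)$-elements in $L^1(T)$ back into $R(T)$), the infimum in $R(T)$ coincides with the infimum in $L^1(T)$; as $|f|$ is a lower bound of $S_f$ in $L^1(T)$, this yields $\|f\|_{T,\infty}\ge|f|$. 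Conditions (a) and (c) then follow: $\|f\|_{T,\infty}=0$ forces $|f|\le 0$ hence $f=0$, and the sum $\|f\|_{T,\infty}+\|h\|_{T,\infty}\in R(T)_+$ is an upper bound of $|f+h|\le|f|+|h|$, yielding $\|f+h\|_{T,\infty}\le\|f\|_{T,\infty}+\|h\|_{T,\infty}$.

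The main obstacle is the homogeneity condition (b) for $\|\cdot\|_{T,\infty}$. The easy direction $\|gf\|_{T,\infty}\le|g|\|f\|_{T,\infty}$ follows since $|g|\|f\|_{T,\infty}\in R(T)_+$ is an upper bound of $|gf|=|g||f|$. For the reverse, I will use that in the universally complete $f$-algebra $R(T)$ with unit $e$, every element bounded below by a strictly positive multiple of $e$ is invertible (e.g.\ via the Yosida representation $R(T)\cong C_\infty(X)$). Given $\epsilon>0$, $\tilde g:=|g|+\epsilon e\ge\epsilon e$ therefore admits an inverse $\tilde g^{-1}\in R(T)_+$. Writing $k:=\|gf\|_{T,\infty}$, the bounds $|g||f|\le k$ and $|f|\le\|f\|_{T,\infty}$ combine to give $\tilde g|f|\le k+\epsilon\|f\|_{T,\infty}$; multiplying by $\tilde g^{-1}$ produces $|f|\le\tilde g^{-1}(k+\epsilon\|f\|_{T,\infty})\in R(T)_+$, so $\|f\|_{T,\infty}\le\tilde g^{-1}(k+\epsilon\|f\|_{T,\infty})$. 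Multiplying back by $\tilde g$ and cancelling the finite term $\epsilon\|f\|_{T,\infty}$ on both sides then yields $|g|\|f\|_{T,\infty}\le k=\|gf\|_{T,\infty}$, completing the proof.
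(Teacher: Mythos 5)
Your proof is correct. For the $L^1(T)$ half it coincides with the paper's argument: strict positivity gives (a), linearity of $T$ plus the triangle inequality for $|\cdot|$ gives (c), and the disjointness computation $|gf|=|g|\,|f|$ combined with the averaging property gives (b). For the $L^\infty(T)$ half you take a genuinely different and more complete route. The paper proves only the triangle inequality (c), via a Minkowski-sum computation (two set inclusions plus the identity $\inf\{h_f+h_g\}=\inf\{h_f\}+\inf\{h_g\}$), and asserts that (a) and (b) ``follow directly from the definition.'' You instead first establish that the set $S_f$ of $R(T)_+$-upper bounds of $|f|$ is downward directed and that $R(T)$ is order closed in $L^1(T)$, so that $\|f\|_{T,\infty}$ is itself an upper bound of $|f|$; this is exactly what makes (a) non-vacuous (the infimum of a set of upper bounds, taken inside the subspace $R(T)$, does not a priori dominate $|f|$), and it also yields (c) in one line. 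Most substantially, your proof of homogeneity (b) for $\|\cdot\|_{T,\infty}$ -- the reverse inequality $|g|\,\|f\|_{T,\infty}\le\|gf\|_{T,\infty}$ obtained by inverting $|g|+\epsilon e$ in the universally complete $f$-algebra and cancelling the $\epsilon$-term -- supplies an argument the paper omits entirely, and this direction is genuinely not immediate (only $\|gf\|_{T,\infty}\le|g|\,\|f\|_{T,\infty}$ is). In short, the paper's treatment is shorter where the two overlap, while your version closes the gaps the paper leaves open and derives (c) more economically as a by-product of the upper-bound property.
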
 

\begin{proof}
For $L^1(T)$, condition (a) of Definition~\ref{def-norm}  follows from the strict positivity of $T$ and while (c)  of Definition~\ref{def-norm}  follows from the linearity of $T$ and $|\cdot|$ obeying the triangle inequality. For (b) of Definition~\ref{def-norm}, let $f\in E$ and $g\in R(T)$, then we observe that the terms
$f^+g^+,f^-g^+,f^+g^-,f^-g^-$ are disjoint and positive. Thus
$$|gf|=|f^+g^+-f^-g^+-f^+g^-+f^-g^-|=f^+g^++f^-g^++f^+g^-+f^-g^-=|g||f|.$$
Here $|g|\in R(T)$, so $T|gf|=T(|g||f|)=|g|T|f|$, from which Definition~\ref{def-norm} part (b) follows.
 
We now consider the case of $L^\infty(T)$. Here (a) and (b) follow directly from the definition
of $\|\cdot\|_{T,\infty}$. For (c), consider $f,g\in L^\infty(T)$. As $|f+g|\le |f|+|g|$, it follows that
\begin{eqnarray*}
\{ h \in R(T)_+ \, | \, |f|+|g|  \le h \} \subset \{ h \in R(T)_+ \, | \, |f+g|  \le h \}.
\end{eqnarray*}
Therefore
\begin{eqnarray*}
\|f+g\|_{T,\infty} = \inf\{ h \in R(T)_+ \,| \, |f+g|\le h\} 
\le \inf\{ h \in R(T)_+ \, | \, |f|+|g|\le h\}.
\end{eqnarray*}
Writing $h = h_f + h_g$, for $h_f, h_g \in R(T)_+$, it follows that
\begin{eqnarray*}
\{h_f+h_g \,|\, |f|\le h_f, |g|\le h_g, h_f,h_g \in R(T)_+ \} \subset \{ h \in R(T)_+ \,|\, |f|+|g|\le h \},
\end{eqnarray*}
giving
\begin{eqnarray*}
\inf\{h_f+h_g \,|\, |f|\le h_f, |g|\le h_g, h_f,h_g \in R(T)_+\} \ge \inf\{ h \in R(T)_+ \,|\, |f|+|g|\le h \}.
\end{eqnarray*}
Combining the above and noting that the conditions $\left|f\right|  \le h_{f}$ and $\left|g\right|  \le h_{g}$ are independent, we have
\begin{eqnarray*}
\|f+g\|_{T,\infty} 
& \le&
\inf\{h_f+h_g \,|\, |f|\le h_f, |g|\le h_g, h_f,h_g \in R(T)_+\}\\
 &=&
\inf\{h_f \,|\, |f|\le h_f, h_f \in R(T)_+\} 
+\inf\{h_g \,|\, |g|\le h_g, h_g \in R(T)_+\}\\
&=& \|f \|_{T,\infty} + \|g\|_{T,\infty}.\quad \qed
\end{eqnarray*}
 \end{proof}

\begin{thm}
 \label{L1 x Linfty = L1}
If $f \in {L}^{1}(T)$ and $g \in {L}^{\infty}(T)$, then $gf \in {L}^{1}(T)$ and
$$\|gf\|_{T,1}\le \|g\|_{T,\infty}\|f\|_{T,1}.$$
\end{thm}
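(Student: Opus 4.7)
The plan is to reduce the bound to an application of the averaging property of $T$, after first replacing $g$ by the (scalar-like) bound $\|g\|_{T,\infty}\in R(T)_+$ that it produces.

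First I would establish the key pointwise bound $|g|\le \|g\|_{T,\infty}$ in $L^1(T)$. Let $S=\{h\in R(T)_+\,|\,|g|\le h\}$, so $\|g\|_{T,\infty}=\inf S$. Since $|g|$ is a lower bound for $S$ in $E^u$ and $\inf S$ is taken in the Dedekind complete space $R(T)$, the inequality $|g|\le\inf S=\|g\|_{T,\infty}$ follows. (The set $S$ is also downward directed, because $h_1,h_2\in S$ implies $h_1\wedge h_2\in R(T)_+$ by the $f$-algebra structure of $R(T)$ from Theorem~\ref{R(T) x R(T) = R(T)}, and still dominates $|g|$.)

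Next, I would use the disjointness identity $|gf|=|g||f|$ established inside the proof of Theorem~\ref{norm} (the terms $f^\pm g^\pm$ are pairwise disjoint in the $f$-algebra $E^u$), to deduce
\begin{equation*}
|gf|=|g||f|\le \|g\|_{T,\infty}|f|.
\end{equation*}
By Theorem~\ref{R(T) x R(T) = R(T)}, $L^1(T)$ is an $R(T)$-module, so $\|g\|_{T,\infty}|f|\in L^1(T)$. Since $L^1(T)=\mathrm{dom}(T)$ is an order ideal in $E^u$, the domination above forces $gf\in L^1(T)$, giving the first assertion.

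Finally, the inequality for the norm drops out by applying $T$ to both sides of the above bound and invoking the averaging property of $T$ (Theorem 5.3 of \cite{conditional}): since $\|g\|_{T,\infty}\in R(T)$,
\begin{equation*}
\|gf\|_{T,1}=T|gf|\le T\bigl(\|g\|_{T,\infty}|f|\bigr)=\|g\|_{T,\infty}\,T|f|=\|g\|_{T,\infty}\|f\|_{T,1}.
\end{equation*}
The main (and only real) obstacle is the justification of the pointwise bound $|g|\le\|g\|_{T,\infty}$; after that, the module structure from Theorem~\ref{R(T) x R(T) = R(T)} and the averaging identity make the rest immediate.
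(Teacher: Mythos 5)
Your argument follows essentially the same route as the paper's: reduce to the identity $|gf|=|g||f|$, dominate by an element of $R(T)_+$, use the $R(T)$-module structure and the ideal property of $L^1(T)$ for membership, and finish with the averaging property of $T$. The only structural difference is where the infimum is taken: the paper applies $T$ first, obtaining $\|gf\|_{T,1}\le h\|f\|_{T,1}$ for every $h\in R(T)_+$ dominating $|g|$, and only afterwards passes to the infimum over $h$; you pass to the infimum first, via the claim $|g|\le\|g\|_{T,\infty}$, and then apply $T$ once.

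That claim is the one place where your justification is not right as written. From ``$|g|$ is a lower bound for $S$ in $E^u$'' and ``$R(T)$ is Dedekind complete'' you cannot conclude $|g|\le\inf_{R(T)}S$: the infimum of $S$ computed inside the Riesz subspace $R(T)$ is in general only below the infimum computed in $E^u$, so a lower bound taken in the ambient space need not lie below it --- this is exactly the phenomenon of a Riesz subspace failing to be order closed, and Dedekind completeness of the subspace does not prevent it. What rescues the step here is that $R(T)$ is the range of the order continuous positive projection $T$: since $S$ is downward directed (as you note) and bounded below by $0$, its infimum $u$ in $E^u$ satisfies $Tu=u$ by order continuity together with $Th=h$ for $h\in S$, so $u\in R(T)$, hence $u=\inf_{R(T)}S=\|g\|_{T,\infty}$, and only then does $|g|\le u$ give what you want. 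With that repair the proof is complete; for comparison, the paper's version of this delicate step is the interchange $\inf\{h\|f\|_{T,1}\}=(\inf\{h\})\|f\|_{T,1}$, justified by order continuity of multiplication by a fixed positive element of the $f$-algebra, so the two proofs carry an equivalent technical burden at this point.
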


\begin{proof}
From linearity, it suffices to show that $gf \in {L}^{1}(T)$ for 
$f \in {L}^{1}(T)_{+}$ and $g \in {L}^{\infty}(T)_{+}$. 
Here there exists $h \in R(T)_{+}$ such that $g  \le h$, and we note from earlier that
$hf\in {L}^{1}(T)_{+}$, but $0\le gf\le hf$, giving $gf\in {L}^{1}(T)_{+}$.

Now for each $h\in R(T)_+$ with $|g|\le h$ we have from $T$ being an averaging operator that
$$\|gf\|_{T,1}=T|gf|=T(|g||f|)\le T(h|f|)=hT|f|=h\|f\|_{T,1}.$$
However, from the compatability of the multiplicative structure with the order structure,
$$\inf\{h\|f\|_{T,1}\,|\,|g|\le h,h\in R(T)_+\}=\inf\{h\in R(T)_+\,|\,|g|\le h\}\|f\|_{T,1}=\|f\|_{T,1}\|g\|_{T,\infty},$$
from which the result follows.
\qed
\end{proof}

Setting $f=e$, where $e$ is a weak order unit with $Te=e$ and which has been chosen as the algebraic unit for the $f$-algebra structure, we have the following corollary to 
Theorem \ref{L1 x Linfty = L1}.

\begin{cor} \label{baw-norm-inequality}
If $g \in {L}^{\infty}(T)$ then $\|g\|_{T,1}\le \|g\|_{T,\infty}.$
\end{cor}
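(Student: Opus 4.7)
The plan is to apply Theorem \ref{L1 x Linfty = L1} directly with the specific choice $f = e$, where $e$ is the weak order unit satisfying $Te = e$ that has been fixed as the algebraic unit for the $f$-algebra structure on $R(T)$.

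First I would verify that $e \in L^1(T)$ and compute $\|e\|_{T,1}$. Since $e$ is a (positive) weak order unit with $Te = e$, it lies in $R(T) \subset L^1(T)$, and
\[
\|e\|_{T,1} = T|e| = Te = e.
\]
Second, because $e$ is the multiplicative identity of the $f$-algebra, $ge = g$ for every $g \in L^\infty(T)$, so in particular $\|ge\|_{T,1} = \|g\|_{T,1}$.

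Then Theorem \ref{L1 x Linfty = L1}, applied to $f = e \in L^1(T)$ and $g \in L^\infty(T)$, yields
\[
\|g\|_{T,1} = \|ge\|_{T,1} \le \|g\|_{T,\infty}\,\|e\|_{T,1} = \|g\|_{T,\infty}\cdot e = \|g\|_{T,\infty},
\]
which is the desired inequality. There is no real obstacle: the argument is just a substitution into the previous theorem, and the only thing to be careful about is invoking the two properties of $e$ (namely $Te = e$, giving $\|e\|_{T,1} = e$, and $e$ being the algebra unit, giving $ge = g$) which are both granted by the hypotheses stated immediately before the corollary.
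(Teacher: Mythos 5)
Your proposal is correct and is exactly the paper's argument: the authors also obtain the corollary by setting $f=e$ in Theorem \ref{L1 x Linfty = L1}, using $Te=e$ and the fact that $e$ is the algebraic unit. You have merely spelled out the two verifications ($\|e\|_{T,1}=e$ and $ge=g$) that the paper leaves implicit.
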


To conclude this section, we give a variant of the conditional Jensen's inequality. For additional details on conditional Jensen's inequalities in Riesz spaces, see \cite{jensen's inequality}.

\begin{thm}
\label{jensen's inequality}
If $S$ is a conditional expectation
operator on $L^1(T)$ compatible $T$ (in the sense that $TS=T=ST$), then
\begin{eqnarray*}
\|Sf \|_{T, \, p}  \le \| f \|_{T,p},
\end{eqnarray*}
for all $f \in L^p(T), p = 1, \infty$.
\end{thm}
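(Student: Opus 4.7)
My plan is to treat the two cases $p=1$ and $p=\infty$ separately, using in both cases the fact that the compatibility hypothesis $TS=T=ST$ forces $R(T)\subset R(S)$, so that elements of $R(T)$ are fixed by $S$.

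For $p=1$, I would start from the definition $\|Sf\|_{T,1}=T|Sf|$. Since $S$ is positive and linear, the inequalities $\pm f\le |f|$ give $\pm Sf\le S|f|$, hence $|Sf|\le S|f|$. Applying $T$ (which is positive) and using $TS=T$ yields
\begin{eqnarray*}
\|Sf\|_{T,1}=T|Sf|\le T(S|f|)=(TS)|f|=T|f|=\|f\|_{T,1},
\end{eqnarray*}
which is the desired inequality.

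For $p=\infty$, I would first observe that because $ST=T$, every element $g\in R(T)$ is fixed by $S$: if $g=Tx$ then $Sg=(ST)x=Tx=g$. Consequently, if $g\in R(T)_+$ satisfies $|f|\le g$, positivity of $S$ together with $|Sf|\le S|f|$ gives $|Sf|\le S|f|\le Sg=g$. Hence
\begin{eqnarray*}
\{g\in R(T)_+\,|\,|f|\le g\}\subset \{g\in R(T)_+\,|\,|Sf|\le g\},
\end{eqnarray*}
and taking infima (which exist in $R(T)$ by its universal completeness, Theorem \ref{R(T) x R(T) = R(T)}) yields $\|Sf\|_{T,\infty}\le \|f\|_{T,\infty}$.

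The only step that requires any real care is the observation that $R(T)$ is contained in the fixed-point set of $S$; everything else is routine use of positivity and the averaging/projection identities. Once that containment is in hand, both cases collapse to one-line comparisons, and no Jensen-type convex function machinery is needed because we are working only with the exponents $1$ and $\infty$.
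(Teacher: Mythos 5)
Your proposal is correct and follows essentially the same route as the paper: for $p=1$ it uses positivity of $S$ to get $|Sf|\le S|f|$ and then $TS=T$, and for $p=\infty$ it uses that $ST=T$ fixes elements of $R(T)$ to obtain the set inclusion $\{g\in R(T)_+\,|\,|f|\le g\}\subset\{g\in R(T)_+\,|\,|Sf|\le g\}$ before taking infima. The only difference is presentational: you make the fixed-point observation $Sg=g$ for $g\in R(T)$ explicit as a preliminary step, whereas the paper folds it into the chain $Sg=STg=Tg=g$.
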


\begin{proof}
For $p = 1$, as $S$ is a positive operator,
\begin{eqnarray*}
\| Sf \|_{T,1} = T|Sf|  \le TS|f| = T|f| = \|f \|_{T,1}.
\end{eqnarray*}

For $p = \infty$,  if $|f|  \le g \in R(T)^+$, then $g=Tg$ and from the positivity of $S$ we have $|Sf|\le S|f|$ so $$|Sf|\le S|f|  \le Sg = STg = Tg= g.$$ Thus
\begin{eqnarray*}
\{ g \in R(T)^+ \, | \, |f|  \le g\} \subset \{g \in R(T)^+ \, | \, |Sf|  \le g\},
\end{eqnarray*}
from which it follows that $\|Sf \|_{T,\infty}  \le \|f \|_{T,\infty}$.
 \qed
\end{proof}

\section{Mixing inequalities}\label{section-inequality}

In this section we consider conditional versions of strong mixing ($\alpha$-mixing) and uniform mixing ($\varphi$-mixing) in the Riesz space setting. At the core of mixing processes is the family of inequalities generally termed the mixing inequalities, see \cite{bill, edgar-s, ibragimov, mix3, serfling} for the classical mixing inequalities. 
We begin by giving conditional definitions of strong and uniform mixing in the measure 
theoretic setting. These conditional definitions of mixing admit direct generalizations
to Riesz spaces with conditional expectation operators. 
We conclude with a conditional mixing inequality for conditionally strong mixing processes in Riesz spaces. This yields, directly, conditional mixing inequalities for Riesz space 
conditionally uniform mixing processes and conditional mixing inequalities for
conditionally strong and conditionally uniform mixing processes in the measure space setting.

In the classical measure theoretic setting, the strong mixing coefficient is defined as follows.
Let $(\Omega, \mathcal{F}, \mu)$ be a probability space and $\mathcal{A}$ and 
$\mathcal{B}$ be sub-$\sigma$-algebras of $\mathcal{F}$.
The strong mixing coefficient
between $\mathcal{A}$ and $\mathcal{B}$ is 
\begin{eqnarray}\label{baw-mixing-1}
\alpha(\mathcal{A},\mathcal{B}) = \sup\{|\mu(A \cap B)-\mu(A)\mu(B)| \,|\, A \in \mathcal{A}, B \in \mathcal{B}\}.
\end{eqnarray}

In place of the expectation, we could condition on 
 a sub-$\sigma$-algebra, say $\mathcal{C}$, of $\mathcal{A}\cap\mathcal{B}$ which
 would result in the $\mathcal{C}$-conditioned strong mixing coefficient
\begin{eqnarray}\label{conditioned}
\alpha_\mathcal{C}(\mathcal{A},\mathcal{B}) 
=\sup\{|\mathbb{E}[\mathbb{I}_A\mathbb{I}_B|\mathcal{C}]
-\mathbb{E}[\mathbb{I}_A|\mathcal{C}]\mathbb{E}[\mathbb{I}_B|\mathcal{C}]\,|\,
A\in\mathcal{A},B\in\mathcal{B}].
\end{eqnarray}

\begin{defs}
Let $E$ be a Dedekind complete Riesz space with weak order unit, say $e$, and conditional 
expectation operator, $T$, with $Te=e$.
If $U$ is a conditional expectation operators on $E$, with 
$TU=T=UT$, then we say that $U$ is compatible with $T$.
If $U$ is a conditional expectation on $E$ compatible with $T$ then
we denote by $\mathcal{B}(U)$ the set of band projections $P$ on $E$ with 
$Pe\in R(U)$.
\end{defs}

In light of (\ref{conditioned}), we define the strong mixing coefficient in a Riesz space with conditional expectation operator as follows.

\begin{defs}
Let $E$ be a Dedekind complete Riesz space with weak order unit, say $e$, and conditional 
expectation operator, $T$, with $Te=e$.
We define the $T$-conditional strong mixing coefficient with respect to the conditional expectation operators  $U$ and $V$ on $E$ compatible with $T$, by
$$\alpha_T(U,V):=\sup\{|TPQe-TPe\cdot TQe|\,|\,P\in \mathcal{B}(U), Q\in \mathcal{B}(V)\}.$$
\end{defs}

We can now give bounds for the $T$-conditional strong mixing coefficient in terms
of the $T$-conditional norm.

\begin{thm}
 \label{riesz}
Let $E$ be a  $T$-universally complete Riesz space, $E=L^1(T)$, where $T$ is a conditional 
expectation operator on $E$ where $E$ has a weak order unit, say $e$, with $Te=e$.
Let $U$ and $V$ be conditional expectation operators on $E$ compatible with $T$, then
\begin{eqnarray*}
\alpha_T(U,V) 
\le \sup_{Q\in \mathcal{B}(V)}\|UQe-TQe\|_{T,1}
\le 2\alpha_T(U,V).
\end{eqnarray*}
\end{thm}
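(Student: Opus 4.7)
The plan is to reduce both inequalities to the single identity
$$TPQe - TPe\cdot TQe \;=\; T\bigl(Pe\cdot(UQe-TQe)\bigr), \qquad P\in\mathcal{B}(U),\ Q\in\mathcal{B}(V),$$
which rewrites the ``covariance'' appearing in the definition of $\alpha_T(U,V)$ in terms of the single element $UQe-TQe$. To derive it I would use that the band projection $P$ acts as multiplication by the idempotent $Pe$, apply the averaging property of $U$ (legitimate since $Pe\in R(U)$ and $Qe$ is $e$-bounded) together with $TU=T$ to get $T(Pe\cdot Qe)=T(Pe\cdot UQe)$, and apply the averaging property of $T$ (legitimate since $TQe\in R(T)$) to get $T(Pe\cdot TQe)=TPe\cdot TQe$.

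For the first inequality, the identity immediately gives
$$|TPQe - TPe\cdot TQe| \;\le\; T\bigl(Pe\cdot|UQe-TQe|\bigr) \;\le\; T|UQe-TQe| \;=\; \|UQe-TQe\|_{T,1},$$
since $0\le Pe\le e$. Taking the supremum over $P\in\mathcal{B}(U)$ on the left and over $Q\in\mathcal{B}(V)$ on the right delivers $\alpha_T(U,V)\le \sup_{Q\in\mathcal{B}(V)}\|UQe-TQe\|_{T,1}$.

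For the second inequality I would fix $Q\in\mathcal{B}(V)$, set $f:=UQe-TQe$, and choose $P$ to be the band projection of $E$ onto the band $\{f^+\}^{dd}$, so that $|f|=(2P-I)f$. Since $TU=T$ and $T^2=T$, one has $Tf=TUQe-TTQe=0$, and so the identity from the first paragraph yields $T|f|=2TPf=2\bigl(TPQe-TPe\cdot TQe\bigr)$. As $T|f|\ge 0$, the bracket equals $|TPQe-TPe\cdot TQe|$, hence is bounded by $\alpha_T(U,V)$, \emph{provided} $P\in\mathcal{B}(U)$; taking the supremum over $Q$ then finishes the argument.

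The step I expect to be the main obstacle is precisely verifying that this $P$ belongs to $\mathcal{B}(U)$, i.e.\ that $Pe\in R(U)$. The plan is to observe that $f=UQe-TQe\in R(U)$, because $UQe\in R(U)$ and $R(T)\subset R(U)$ (the inclusion coming from $UT=T$: if $g=Tg$ then $Ug=UTg=Tg=g$). Since $R(U)$ is a Riesz subspace this forces $f^+\in R(U)$, and the band projection is then recovered as $Pe=\sup_n(e\wedge nf^+)$. Each truncation $e\wedge nf^+$ already lies in $R(U)$; the supremum exists in $E$ by Dedekind completeness and the bound $Pe\le e$; and order-closedness of $R(U)$, which follows from the order continuity of $U$, places the supremum back in $R(U)$.
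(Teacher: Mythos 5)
Your proposal is correct, and its backbone is the same as the paper's: the identity $TPQe-TPe\cdot TQe=T\bigl(Pe\cdot(UQe-TQe)\bigr)$ (which the paper reaches via the averaging property and the And\^o--Douglas theorem, and you reach via $TU=T$ plus averaging over $Pe\in R(U)$), followed by choosing the band projection onto the band generated by $[U(Qe-TQe)]^+$ and verifying it lies in $\mathcal{B}(U)$. Where you differ is in efficiency. For the first inequality the paper computes the exact suprema over $P$ of the positive and negative deviations separately, takes a lattice supremum of the two, and then invokes the symmetry $Q\mapsto I-Q$ to fold $[U(Qe-TQe)]^-$ into a positive part before taking the supremum over $Q$; your one-line domination $|T(Pe\cdot f)|\le T(Pe\cdot|f|)\le T|f|$ (valid since $0\le Pe\le e$) bypasses all of that. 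For the second inequality the paper writes $T|f|=TP_+f-TP_-f$ with two band projections and bounds each term by $\alpha_T(U,V)$, whereas you exploit $Tf=0$ to get $T|f|=2Tf^+=2TPf$ from a single projection; these are essentially the same estimate. Your verification that $Pe=\sup_n(e\wedge nf^+)\in R(U)$, using $R(T)\subset R(U)$, closure of the Riesz subspace $R(U)$ under lattice operations, and order continuity of $U$, is exactly the justification the paper leaves implicit in its appeal to $P_\pm\in\mathcal{B}(U)$. Net effect: same theorem, same central mechanism, but your route to the first inequality is genuinely shorter and avoids the $I-Q$ symmetrization entirely.
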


\proof
Let $P\in \mathcal{B}(U)$ and $Q\in \mathcal{B}(V)$ then as $T$ is an averaging operator
$TPe\cdot TQe=T(Pe\cdot TQe)$.
The $f$-algebra structure gives that $Pe\cdot Qe=PQe$ and hence
\begin{eqnarray*}
T(Pe\cdot Qe)-TPe\cdot TQe
=T[Pe\cdot (Qe-TQe)]=T[P(Qe-TQe)].
\end{eqnarray*}
From the And\^o-Douglas-Radon-Nikod\`ym Theorem, see \cite{ando-douglas}, it follows that
\begin{eqnarray}\label{whatever}
T[P(Qe-TQe)]=T[PU(Qe-TQe)]
\end{eqnarray}
which is maximized, over $P\in \mathcal{B}(U)$, when 
$P=P_+$ is the band projection onto the band generated by
$[U(Qe-TQe)]^+$, in which case
\begin{eqnarray}
\label{new-a}
TP_+U(Qe-TQe)=T[U(Qe-TQe)]^+.
\end{eqnarray}
Hence
$$\sup\{TPQe-TPe\cdot TQe\,|\,P\in \mathcal{B}(U)\}=T[U(Qe-TQe)]^+.$$
We note that (\ref{whatever}) is minimized, over $P\in \mathcal{B}(U)$, when
$P=P_-$ is the band projection onto the band generated by
$[U(Qe-TQe)]^-$, in which case
\begin{eqnarray}
\label{new-b}
TP_-U(Qe-TQe)=-T[U(Qe-TQe)]^-.
\end{eqnarray}
Hence
$$\sup\{-TPQe+TPe\cdot TQe\,|\,P\in \mathcal{B}(U)\}=T[U(Qe-TQe)]^-.$$
Now
\begin{align*}
&\sup\{|TPQe-TPe\cdot TQe|\,|\,P\in \mathcal{B}(U)\}\\
&=\sup\{(TPQe-TPe\cdot TQe)\vee (-TPQe+TPe\cdot TQe) \,|\,P\in \mathcal{B}(U)\}\\
&\le\sup\{(TP_1Qe-TP_1e\cdot TQe)\vee (-TP_2Qe+TP_2e\cdot TQe) \,|\,P_1,P_2\in \mathcal{B}(U)\}\\
&=\sup\{(TPQe-TPe\cdot TQe) \,|\,P\in \mathcal{B}(U)\}\vee \sup\{(-TPQe+TPe\cdot TQe) \,|\,P\in \mathcal{B}(U)\}
\end{align*}
and applying (\ref{new-a}) to each of the expressions in the last line of the above gives
\begin{eqnarray*}
\sup\{|TPQe-TPe\cdot TQe|\,|\,P\in \mathcal{B}(U)\}
=T([U(Qe-TQe)]^+)\vee T([U(Qe-TQe)]^-).
\end{eqnarray*}
From the linearity of the expectation and conditional expectation operators,
$$U[(I-Q)e-T(I-Q)e]=-U[Qe-TQe].$$
Hence
$$(U[(I-Q)e-T(I-Q)e])^+=[U(Qe-TQe)]^-$$
and thus
\begin{align}
&\sup\{|TPQe-TPe\cdot TQe|\,|\,P\in \mathcal{B}(U)\}\nonumber\\
&\le
T[U(Qe-TQe)]^+\vee T(U[(I-Q)e-T(I-Q)e])^+.\label{new-z}
\end{align}
Since $Q\in \mathcal{B}(V)$ implies  $I-Q\in \mathcal{B}(V)$, 
the first inequality of the Theorem follows from taking the supremum of (\ref{new-z})
over $V\in \mathcal{B}(V)$.

Combining (\ref{whatever}), (\ref{new-a}) and (\ref{new-b}), we have
\begin{eqnarray*}
\|UQe-TQe\|_{T,1}&=&T|U(Qe-TQe)|\\
&=&T(U(Qe-TQe))^++T(U(Qe-TQe))^-\\
&=&TP_+U(Qe-TQe)-TP_-U(Qe-TQe).
\end{eqnarray*}
As $U$ is an averaging operator and $P_-,P_+\in \mathcal{B}(U)$ we have
\begin{eqnarray*}
TP_\pm U(Qe-TQe)=TUP_\pm (Qe-TQe)=TP_\pm(Qe-TQe).
\end{eqnarray*}
The positivity of $T$ and the definition of $\alpha_T(U,V)$
give
\begin{eqnarray*}
TP_\pm (Qe-TQe)\le T|P_\pm (Qe-TQe)|\le \alpha_T(U,V).
\end{eqnarray*}
Combining the above gives
\begin{eqnarray*}
\|UQe-TQe\|_{T,1}
&\le&2\alpha_T(U,V)
\end{eqnarray*}
which proves the second inequality of the Theorem.
\qed

It should be noted here that the product $TPe\cdot TQe$
exists in $E$, as shown in the previous section. 

We now consider the uniform mixing coefficient.
In the measure theoretic setting the uniform mixing coefficient is defined as follows.
Let $(\Omega, \mathcal{F}, \mu)$ be a probability space and $\mathcal{A}$ and 
$\mathcal{B}$ be sub-$\sigma$-algebras of $\mathcal{F}$.
The uniform mixing coefficient
between $\mathcal{A}$ and $\mathcal{B}$ is 
\begin{eqnarray}\label{baw-mixing-2}
\varphi(\mathcal{A},\mathcal{B}) = \sup\{|\mu(B|A)-\mu(B)| \,|\, A \in \mathcal{A}, B \in \mathcal{B},\mu(A)>0\}.
\end{eqnarray}
As with the strong mixing coefficient, the uniform mixing coefficient has an interesting
formulation in terms of $L^p$ norms.

\begin{lem}
 \label{classical-uniform}
Let $(\Omega, \mathcal{F}, \mu)$ be a probability space and $\mathcal{A}$ and 
$\mathcal{B}$ be sub-$\sigma$-algebras of $\mathcal{F}$, then
\begin{eqnarray*}
\varphi(\mathcal{A},\mathcal{B}) 
=\sup_{B\in \mathcal{B}}\|\mathbb{E}[\mathbb{I}_B-\mathbb{E}[\mathbb{I}_B]|\mathcal{A}]\|_\infty.
\end{eqnarray*}
\end{lem}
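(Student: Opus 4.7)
The plan is to unpack both sides by introducing the $\mathcal{A}$-measurable random variable $f_B := \mathbb{E}[\mathbb{I}_B - \mathbb{E}[\mathbb{I}_B]|\mathcal{A}]$. Since $\mathbb{E}[\mathbb{I}_B] = \mu(B)$ is a constant, this reduces to $f_B = \mu(B|\mathcal{A}) - \mu(B)$. For any $A \in \mathcal{A}$ with $\mu(A) > 0$, the averaging property of conditional expectation together with $\mathcal{A}$-measurability of $\mathbb{I}_A$ yields the bridge identity
\begin{eqnarray*}
\mu(B|A) - \mu(B) = \frac{1}{\mu(A)} \int_A f_B \, d\mu,
\end{eqnarray*}
which is the link between the defining form of $\varphi(\mathcal{A},\mathcal{B})$ in (\ref{baw-mixing-2}) and the essential supremum of $f_B$.

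For the inequality $\varphi(\mathcal{A},\mathcal{B}) \le \sup_{B \in \mathcal{B}} \|f_B\|_\infty$, I would take absolute values in the bridge identity and use $|f_B| \le \|f_B\|_\infty$ almost everywhere to obtain $|\mu(B|A) - \mu(B)| \le \|f_B\|_\infty$; taking the supremum over admissible $A$ and then over $B \in \mathcal{B}$ then delivers the bound.

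For the reverse inequality, fix $B \in \mathcal{B}$ and note that $\|f_B\|_\infty = \max(\|f_B^+\|_\infty, \|f_B^-\|_\infty)$. Assume first that the maximum is attained by $f_B^+$; the other case is handled symmetrically, since replacing $B$ by $B^c$ sends $f_B$ to $-f_B$ while keeping the indexing set $\mathcal{B}$ fixed. For $\epsilon \in (0, \|f_B\|_\infty)$, the set $A_\epsilon := \{\omega : f_B(\omega) \ge \|f_B\|_\infty - \epsilon\}$ lies in $\mathcal{A}$ by measurability of $f_B$ and satisfies $\mu(A_\epsilon) > 0$ by the definition of essential supremum; feeding $A_\epsilon$ into the bridge identity produces $\mu(B|A_\epsilon) - \mu(B) \ge \|f_B\|_\infty - \epsilon$. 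Sending $\epsilon \downarrow 0$ and taking $\sup_{B \in \mathcal{B}}$ then gives $\varphi(\mathcal{A},\mathcal{B}) \ge \sup_B \|f_B\|_\infty$. The main obstacle is exactly this reverse direction: one must choose a genuine $\mathcal{A}$-measurable set $A_\epsilon$ realizing the essential supremum and dispose of the sign ambiguity between $f_B^+$ and $f_B^-$ via the $B \leftrightarrow B^c$ substitution; once these points are handled, the identification is a clean consequence of the averaging property of $\mathbb{E}[\cdot|\mathcal{A}]$.
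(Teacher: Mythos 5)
Your proof is correct and follows essentially the same route as the paper: the ``bridge identity'' $\mu(B|A)-\mu(B)=\mu(A)^{-1}\mathbb{E}[\mathbb{I}_A f_B]$ is exactly the computation in the paper's proof, and your two inequalities together just establish the variational characterization $\|f_B\|_\infty=\sup\{|\mathbb{E}[\mathbb{I}_A f_B]|/\mu(A)\,:\,A\in\mathcal{A},\ \mu(A)>0\}$ for the $\mathcal{A}$-measurable function $f_B$, which the paper simply asserts without proof. Your explicit construction of the sets $A_\epsilon$ (and the remark that the sign ambiguity can be absorbed by the $B\leftrightarrow B^c$ substitution, or equally by the absolute value already present in the definition of $\varphi$) fills in that asserted step correctly.
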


\proof
We begin by observing that, since $\mathbb{E}[\mathbb{I}_B|\mathcal{A}]-\mathbb{E}[\mathbb{I}_B]$ is $\mathcal{A}$-measurable,
$$\|\mathbb{E}[\mathbb{I}_B|\mathcal{A}]-\mathbb{E}[\mathbb{I}_B]\|_\infty
=\sup\left\{\left. \frac{|\mathbb{E}[\mathbb{I}_A(\mathbb{E}[\mathbb{I}_B|\mathcal{A}]-\mathbb{E}[\mathbb{I}_B])]|}{\mu(A)} \right| A\in\mathcal{A}, \mu(A)>0\right\}.$$
Now for $A\in\mathcal{A}$ with $\mu(A)>0$ we have
$$\frac{\mathbb{E}[\mathbb{I}_A(\mathbb{E}[\mathbb{I}_B|\mathcal{A}]-\mathbb{E}[\mathbb{I}_B])]}{\mu(A)}
=\frac{\mathbb{E}[\mathbb{I}_A\mathbb{I}_B]-\mathbb{E}[\mathbb{I}_A]\mathbb{E}[\mathbb{I}_B]}{\mu(A)}=\mu(B|A)-\mu(B),$$
from which the Lemma follows.
\qed

The above Lemma leads naturally to conditional and Riesz space variants of the 
uniform mixing coefficient.

\begin{defs}\label{def-uniform}
Let $E$ be a Dedekind complete Riesz space with weak order unit, say $e$,
 $T$ a conditional 
expectation operator on $E$ and $E$ having weak order unit say $e$ with $Te=e$.
Let $U$ and $V$ be conditional expectation operators on $E$ compatible with $T$,
 then
\begin{eqnarray*}
\varphi_T(U,V) 
=\sup_{Q\in \mathcal{B}(V)}\|UQe-TQe\|_{T,\infty}.
\end{eqnarray*}
\end{defs}

Combining Corollary \ref{baw-norm-inequality} with Theorem \ref{riesz} and Definition \ref{def-uniform} we have the following 
theorem.
  
\begin{thm}
 \label{mixing coefficients inequality}
Let $E$ be a  $T$-universally complete Riesz space, $E=L^1(T)$, where $T$ is a conditional 
expectation operator on $E$ where $E$ has a weak order unit, say $e$, with $Te=e$.
Let $U$ and $V$ be conditional expectation operators on $E$ compatible with $T$,
then
\begin{eqnarray*}
 \alpha_T(U,V)\le\varphi_T(U,V).  
\end{eqnarray*}
\end{thm}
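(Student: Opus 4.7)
The proof plan is essentially to chain together the three ingredients cited just before the theorem statement, so the work reduces to checking that the relevant elements lie in $L^\infty(T)$ so that Corollary \ref{baw-norm-inequality} can be applied termwise.

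First I would unpack the three objects. Theorem \ref{riesz} gives the upper bound
\[
\alpha_T(U,V) \;\le\; \sup_{Q\in\mathcal{B}(V)}\|UQe-TQe\|_{T,1},
\]
and Definition \ref{def-uniform} reads
\[
\varphi_T(U,V) \;=\; \sup_{Q\in\mathcal{B}(V)}\|UQe-TQe\|_{T,\infty}.
\]
So it suffices to establish, for every $Q\in\mathcal{B}(V)$, the pointwise bound $\|UQe-TQe\|_{T,1}\le\|UQe-TQe\|_{T,\infty}$, and then take suprema.

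Next I would verify the hypothesis of Corollary \ref{baw-norm-inequality}, namely that $UQe-TQe\in L^\infty(T)$. Since $Q$ is a band projection, $0\le Qe\le e$. The compatibility assumption $TU=T=UT$ combined with $Te=e$ forces $Ue=UTe=Te=e$, so the positivity of $U$ and $T$ gives $0\le UQe\le e$ and $0\le TQe\le e$; hence $|UQe-TQe|\le e\in R(T)_+$, confirming that $UQe-TQe$ lies in $L^\infty(T)$. Corollary \ref{baw-norm-inequality} then yields
\[
\|UQe-TQe\|_{T,1}\;\le\;\|UQe-TQe\|_{T,\infty}.
\]

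Finally I would take the supremum over $Q\in\mathcal{B}(V)$ on both sides, which preserves the order inequality in $R(T)_+$ (the supremum being computed in the Dedekind complete $f$-algebra $R(T)$, which exists by Theorem \ref{R(T) x R(T) = R(T)}), and feed the result back into the estimate from Theorem \ref{riesz} to conclude $\alpha_T(U,V)\le\varphi_T(U,V)$. There is no genuine obstacle here; the only point that needs care is the observation that $Ue=e$, which is not explicitly stated as part of compatibility and must be extracted from $UT=T$ together with $Te=e$.
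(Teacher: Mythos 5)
Your proof is correct and follows exactly the route the paper intends: the paper gives no written proof, stating only that the theorem follows by combining Corollary \ref{baw-norm-inequality}, Theorem \ref{riesz} and Definition \ref{def-uniform}, which is precisely your chain of estimates. Your verification that $UQe-TQe\in L^\infty(T)$ (via $Ue=UTe=Te=e$ so that $|UQe-TQe|\le e\in R(T)_+$) is a detail the paper leaves implicit, and it is worth having on record.
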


The mixing inequalities now give bounds on the norm of the differences between the 
composition of the conditional expectation operators, say $U$ and $V$, compatible
with the conditional expectation operator $T$.
It should be noted that 
if $U$ and $V$ are conditionally independent with respect to $T$ then
$UV=T=VU$.
The measure theoretic version was proved in \cite{mcleish}, wherein results from \cite{dvoretzky} were used.

\begin{thm}
 \label{alpha mixing inequality lemma}
Let $E$ be a $T$-universally complete Riesz space, $T$ a conditional expectation operator on $E$ and $e$ a weak order unit for $E$ with $e=Te$. Let $U$ and $V$ be conditional expectation operators on $E$ compatible with $T$, then,
 for $f \in R(V)\cap L^\infty(T)$, we have 
\begin{eqnarray*}
\|Uf - Tf \|_{T,1}  \le 4\alpha_T(U,V) \|f\|_{T,\infty}.
\end{eqnarray*}
\end{thm}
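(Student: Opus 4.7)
The strategy is to reduce the $L^{1}$ estimate to the defining bound for $\alpha_T(U,V)$ via a duality argument inside $R(U)$ combined with a Freudenthal spectral decomposition inside $R(V)$.  Set $g:=Uf-Tf$; since $Tf\in R(T)\subseteq R(U)$ (the inclusion following from $UT=T$), we have $g\in R(U)$.  Let $P_+$ and $P_-$ denote the band projections on $E$ onto the principal bands generated by $g^+$ and $g^-$ respectively.  Because $R(U)$ is a universally complete $f$-algebra by Theorem~\ref{R(T) x R(T) = R(T)} and $g^\pm\in R(U)_+$, the components $P_\pm e$ lie in $R(U)$, so $P_\pm\in\mathcal{B}(U)$.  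Since each $P_\pm$ commutes with $U$ by averaging and $TU=T$, together with the averaging identity $T(P_\pm e\cdot Tf)=TP_\pm e\cdot Tf$ (valid because $Tf\in R(T)$), a direct computation yields
\begin{eqnarray*}
\|Uf-Tf\|_{T,1}=T|g|=\bigl[T(P_+f)-TP_+e\cdot Tf\bigr]+\bigl[TP_-e\cdot Tf-T(P_-f)\bigr],
\end{eqnarray*}
both brackets being non-negative (equal to $Tg^+$ and $Tg^-$).  It therefore suffices to establish the ``covariance'' estimate $|T(Pf)-TPe\cdot Tf|\le 2\alpha_T(U,V)\|f\|_{T,\infty}$ for every $P\in\mathcal{B}(U)$.

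For this covariance estimate I would split $f=f^+-f^-$ (noting $\|f^\pm\|_{T,\infty}\le\|f\|_{T,\infty}$) and reduce to $f\in R(V)_+\cap L^\infty(T)$.  The $R(T)$-valued norm $h:=\|f\|_{T,\infty}\in R(T)_+\subseteq R(V)_+$ is handled by the rescaling $\tilde f:=f(h+\epsilon e)^{-1}\in R(V)_+$, which uses the invertibility of $h+\epsilon e$ in the universally complete $f$-algebra $R(T)$ and satisfies the \emph{scalar} bound $\tilde f\le e$.  Freudenthal's spectral theorem in $R(V)$ then supplies step-function approximants
\begin{eqnarray*}
\tilde f_n=\frac{1}{n}\sum_{k=1}^n Q_k^{(n)}e,\qquad Q_k^{(n)}\in\mathcal{B}(V),
\end{eqnarray*}
with $|\tilde f-\tilde f_n|\le\tfrac{1}{n}e$.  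By linearity and the definition of $\alpha_T(U,V)$,
\begin{eqnarray*}
|T(P\tilde f_n)-TPe\cdot T\tilde f_n|\le\frac{1}{n}\sum_{k=1}^n|T(PQ_k^{(n)}e)-TPe\cdot TQ_k^{(n)}e|\le\alpha_T(U,V).
\end{eqnarray*}
Order continuity of $T$ lets me pass to the limit $n\to\infty$, and the averaging identities $T(Pf)=(h+\epsilon e)T(P\tilde f)$ and $Tf=(h+\epsilon e)T\tilde f$ yield $|T(Pf)-TPe\cdot Tf|\le(h+\epsilon e)\alpha_T(U,V)$.  Letting $\epsilon\downarrow 0$ and recombining the $f^\pm$ pieces delivers the covariance estimate, and substitution into the identity of the first paragraph produces the factor $4$.

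The main obstacle is the vector-valued character of $\|f\|_{T,\infty}\in R(T)_+$: Freudenthal's spectral theorem is formulated for elements admitting a \emph{scalar} dominant, whereas the natural dominant here lives in $R(T)_+$.  The rescaling $\tilde f=f(h+\epsilon e)^{-1}$ circumvents this, but its validity rests on the universal completeness of $R(T)$ (so that $h+\epsilon e$ is invertible in $R(T)$), on the averaging property of $T$ (so that multiplication by $h+\epsilon e$ commutes with $T$ on the relevant terms), and on the inclusion $R(T)\subseteq R(V)$ (so that the rescaled element $\tilde f$ remains in $R(V)$).  Checking these structural compatibilities in the vector-valued setting is the delicate part of the argument.
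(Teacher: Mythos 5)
Your argument is correct and reaches the constant $4$, but it is organized differently from the paper's proof in two respects. First, where the paper simply invokes the second inequality of Theorem~\ref{riesz} (namely $\|UQe-TQe\|_{T,1}\le 2\alpha_T(U,V)$ for $Q\in\mathcal{B}(V)$) and applies it term by term to the decomposition of $f$, you re-derive the needed per-projection bound inline: you pass to the extremal projections $P_\pm\in\mathcal{B}(U)$ onto the bands generated by $(Uf-Tf)^\pm$ at the top level and reduce everything to the covariance estimate $|TPQe-TPe\cdot TQe|\le\alpha_T(U,V)$. This is essentially the first half of the paper's proof of Theorem~\ref{riesz} transplanted into the present argument; it works, but it duplicates machinery already available. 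Second, and more substantively, you handle the vector-valued dominant $h=\|f\|_{T,\infty}\in R(T)_+$ by the regularization $\tilde f=f(h+\epsilon e)^{-1}$ so as to apply the classical scalar-dominant Freudenthal theorem in $R(V)$, whereas the paper appeals to Theorem~4.2 of \cite{conditional}, which approximates elements of the order interval $[0,h]$ directly by combinations $\sum\theta_i P_i h$ of components of $h$ itself; after the same Abel-summation step both routes yield nested projections in $\mathcal{B}(V)$ with coefficients summing to at most $1$, but the paper's route avoids the $\epsilon$-limit and the inversion of $h+\epsilon e$ altogether. Your structural checks for the rescaling (invertibility of $h+\epsilon e$ in the universally complete $f$-algebra $R(T)$, the inclusion $R(T)\subseteq R(V)$ from $VT=T$, and the averaging identities) are valid, so the regularization is legitimate, just heavier than necessary. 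One small correction: you justify $P_\pm e\in R(U)$ by citing Theorem~\ref{R(T) x R(T) = R(T)} for $R(U)$, but that theorem is proved under the hypothesis that $E$ is $U$-universally complete, which is not assumed; the fact you need follows more directly from $R(U)$ being an order-closed Riesz subspace containing $e$ (by order continuity of $U$), since $P_\pm e=\sup_m\bigl((m\,(Uf-Tf)^\pm)\wedge e\bigr)$.
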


\begin{proof}
Let $g:=\|f\|_{T,\infty}\in R(T)^+$, then $f$ is in the order interval $[-g,g]$.
Hence, from \cite[Theorem 4.2]{conditional}, there are sequences 
$(f_n^\pm)\subset R(V)$ with $0\le f_n^\pm\uparrow f^\pm$ of the form
$$f_n^\pm=\sum_{i=1}^{N_n^\pm} \theta_{i,n}^\pm P_{i,n}^\pm g,$$
where the band projections $P_{i,n}^\pm$ have $P_{i,n}^\pm e \in R(V)$ and $P_{i,n}^\pm P_{j,n}^\pm =0$ for $i\ne j$, and the real numbers $\theta_{i,n}^\pm$
have $0=: \theta_{0,n}^\pm<\theta_{1,n}^\pm<\theta_{2,n}^\pm< \dots<\theta_{N_n,n}^\pm$. Also $\theta_{N_n,n}^\pm\le 1$ since $f_n^\pm\le g$
and $P_{i,n}^\pm g \in R(V)$.
Set
$$Q_{i,n}^\pm=\sum_{j=i}^{N_n^\pm}  P_{i,n}^\pm,$$
then
$$f_n^\pm=\sum_{i=1}^{N_n^\pm} \beta_{i,n}^\pm Q_{i,n}^\pm g,$$
where $0<\beta_{i,n}^\pm:=\theta_{i,n}^\pm-\theta_{i-1,n}^\pm$.
Here $\displaystyle{\sum_{i=1}^{N_n^\pm} \beta_{i,n}^\pm\le 1}$ and 
 $Q_{i,n}^\pm e \in R(V)$.
 
 Now 
 $$|UQ_{i,n}^\pm g - TQ_{i,n}^\pm g|=|U(g\cdot Q_{i,n}^\pm e) - T(g\cdot Q_{i,n}^\pm e)|.$$
 Since $U$ and $T$ are averaging operators with $g\in R(T)\subset R(U)$ it follows that
 \begin{eqnarray}\label{uni-1}
 |U(g\cdot Q_{i,n}^\pm e) - T(g\cdot Q_{i,n}^\pm e)|=|g\cdot(UQ_{i,n}^\pm e - TQ_{i,n}^\pm e)|=g\cdot|UQ_{i,n}^\pm e - TQ_{i,n}^\pm e|.
 \end{eqnarray}
 Hence
 $$T|UQ_{i,n}^\pm g - TQ_{i,n}^\pm g|=T(g\cdot|UQ_{i,n}^\pm e - TQ_{i,n}^\pm e|)=g\cdot T|UQ_{i,n}^\pm e - TQ_{i,n}^\pm e|.$$

Theorem  \ref{riesz} gives 
 $$T|UQ_{i,n}^\pm e - TQ_{i,n}^\pm e|\le 2\alpha_T(U,V)$$
 and hence
 $$T|UQ_{i,n}^\pm g - TQ_{i,n}^\pm g|\le 2\alpha_T(U,V) \cdot g.$$
 Applying the above to $f_n^\pm$ gives
 \begin{align*}
 &T|Uf_{n}^\pm - Tf_{n}^\pm|\\
 &\le \sum_{i=1}^{N_n^\pm} \beta_{i,n}^\pm T|UQ_{i,n}^\pm g - TQ_{i,n}^\pm g|\\
 &\le 2\sum_{i=1}^{N_n^\pm} \beta_{i,n}^\pm \alpha_T(U,V) \cdot g\le 2\alpha_T(U,V) \cdot g.
 \end{align*}
 Taking the order limit as $n\to\infty$ and using the order continuity of conditional 
 expectation operators gives
 $$T|Uf^\pm - Tf^\pm|\le 2\alpha_T(U,V) \cdot g.$$
 Finally
 $$T|Uf - Tf|\le T|Uf^+ - Tf^+|+T|Uf^- - Tf^-|\le 4\alpha_T(U,V) \cdot g$$
 which can be rewritten as in the statement of the theorem.
 \end{proof}
\qed

Applying Theorem \ref{alpha mixing inequality lemma} to probability spaces we have
the following corollary.
\begin{cor}
Let $(\Omega,\mathcal{F},\mu)$ be a probability space,
$\mathcal{C},\mathcal{G}, \mathcal{H}$ be sub-$\sigma$-algebras of $\mathcal{F}$ with $\mathcal{G}$ and  $\mathcal{H}$ containing $\mathcal{C}$.
For $f\in L^0(\Omega,\mathcal{H},\mu)$ with $|f|$ bounded by $g\in  L^0(\Omega,\mathcal{C},\mu)$ we have
\begin{eqnarray*}
\mathbb{E}[|\mathbb{E}[f|\mathcal{G}] - \mathbb{E}[f|\mathcal{C}]|\,|\mathcal{C}]
\le 4 \alpha_\mathcal{C}(\mathcal{G},\mathcal{H}) g.
\end{eqnarray*}
\end{cor}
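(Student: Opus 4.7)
The plan is to specialise Theorem~\ref{alpha mixing inequality lemma} by identifying the classical conditional expectations with Riesz space conditional expectation operators on a common $T$-universally complete Riesz space. I would set $T := \mathbb{E}[\,\cdot\,|\mathcal{C}]$, $U := \mathbb{E}[\,\cdot\,|\mathcal{G}]$ and $V := \mathbb{E}[\,\cdot\,|\mathcal{H}]$, working on $E := L^1(T)$ with weak order unit $e := \mathbb{I}_\Omega$, so that $Te = e$. Since $\mathcal{C} \subset \mathcal{G}$ and $\mathcal{C} \subset \mathcal{H}$, the tower property yields $TU = T = UT$ and $TV = T = VT$, establishing compatibility.

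Next I would check that the mixing coefficients agree under this translation. The band projections $P$ on $E$ with $Pe \in R(U)$ are precisely multiplications by indicators $\mathbb{I}_A$ with $A \in \mathcal{G}$, and likewise for $V$ and $\mathcal{H}$. Under this identification, $TPQe = \mathbb{E}[\mathbb{I}_A \mathbb{I}_B | \mathcal{C}]$ and $TPe \cdot TQe = \mathbb{E}[\mathbb{I}_A|\mathcal{C}]\,\mathbb{E}[\mathbb{I}_B|\mathcal{C}]$, so $\alpha_T(U,V)$ reduces to $\alpha_\mathcal{C}(\mathcal{G},\mathcal{H})$ as defined in (\ref{conditioned}).

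The hypothesis that $f$ is $\mathcal{H}$-measurable with $|f| \le g$ for a $\mathcal{C}$-measurable random variable $g$ translates to $Vf = f$, so $f \in R(V)$, together with $|f| \le g \in R(T)_+$, giving $f \in R(V) \cap L^\infty(T)$ and $\|f\|_{T,\infty} \le g$. Applying Theorem~\ref{alpha mixing inequality lemma} then yields
$$\mathbb{E}\bigl[\,|\mathbb{E}[f|\mathcal{G}] - \mathbb{E}[f|\mathcal{C}]|\,\big|\,\mathcal{C}\bigr] = T|Uf - Tf| = \|Uf - Tf\|_{T,1} \le 4\alpha_T(U,V)\,\|f\|_{T,\infty} \le 4\alpha_\mathcal{C}(\mathcal{G},\mathcal{H})\,g,$$
which is the claimed inequality.

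The main obstacle, such as it is, is the bookkeeping in the middle step: one must verify that the Riesz space band projections $P$ with $Pe \in R(U)$ correspond bijectively (modulo null sets) to sets in $\mathcal{G}$, and that $R(T)_+$ is exactly the cone of nonnegative $\mathcal{C}$-measurable elements of $L^1(T)$. Both facts are standard for classical $L^p$ spaces but merit a brief justification; once they are in place, the corollary is an immediate transcription of Theorem~\ref{alpha mixing inequality lemma} into the measure theoretic language.
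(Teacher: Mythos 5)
Your proposal is correct and takes essentially the same approach as the paper: the paper offers no separate argument, stating only that the corollary follows by ``applying Theorem~\ref{alpha mixing inequality lemma} to probability spaces,'' which is exactly the specialisation $T=\mathbb{E}[\,\cdot\,|\mathcal{C}]$, $U=\mathbb{E}[\,\cdot\,|\mathcal{G}]$, $V=\mathbb{E}[\,\cdot\,|\mathcal{H}]$ on $E=L^1(T)$ that you carry out. Your extra bookkeeping --- identifying band projections $P$ with $Pe\in R(U)$ with indicators of sets in $\mathcal{G}$, and $R(T)_+$ with the nonnegative $\mathcal{C}$-measurable functions, so that $\alpha_T(U,V)=\alpha_{\mathcal{C}}(\mathcal{G},\mathcal{H})$ and $\|f\|_{T,\infty}\le g$ --- simply makes explicit what the paper leaves implicit.
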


Setting $f=Vg$ in Theorem \ref{alpha mixing inequality lemma} and using Theorem
\ref{jensen's inequality}, we obtain the following corollary.

\begin{cor}
 \label{general-inequality}
Let $E$ be a $T$-universally complete Riesz space with weak order unit $e = Te$, where $T$ is a conditional expectation operator on $E$. Let $U$ and $V$ be conditional expectation operators on $E$ compatible with $T$. Then for $g \in L^\infty(T)$
\begin{eqnarray*}
\|UVg - Tg \|_{T,1}  \le 4 \alpha_T(U,V) \|g\|_{T,\infty}.
\end{eqnarray*}
\end{cor}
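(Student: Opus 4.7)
The plan is to apply Theorem~\ref{alpha mixing inequality lemma} to $f := Vg$ after checking the hypothesis $f \in R(V) \cap L^\infty(T)$, then to clean up the resulting bound using the compatibility relations $TV = T = VT$ together with Theorem~\ref{jensen's inequality}.

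First I would verify membership: $Vg \in R(V)$ is immediate because $V$ is a projection with range $R(V)$. To see $Vg \in L^\infty(T)$, I would exploit compatibility. Since $g \in L^\infty(T)$, there is some $h \in R(T)_+$ with $|g| \le h$. Because $VT = T$, every $h \in R(T)$ satisfies $Vh = V(Th) = Th = h$, so $R(T) \subseteq R(V)$ and $V$ fixes $h$. Combining positivity of $V$ with $|Vg| \le V|g| \le Vh = h \in R(T)_+$ gives $Vg \in L^\infty(T)$, and moreover $\|Vg\|_{T,\infty} \le \|g\|_{T,\infty}$; this bound also follows directly from Theorem~\ref{jensen's inequality} applied to $V$ (which is compatible with $T$).

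Next I would invoke Theorem~\ref{alpha mixing inequality lemma} with $f = Vg$ to obtain
\begin{eqnarray*}
\|U(Vg) - T(Vg)\|_{T,1} \le 4\alpha_T(U,V)\|Vg\|_{T,\infty}.
\end{eqnarray*}
The compatibility $TV = T$ collapses $T(Vg)$ to $Tg$, so the left-hand side is exactly $\|UVg - Tg\|_{T,1}$. Chaining with $\|Vg\|_{T,\infty} \le \|g\|_{T,\infty}$ from the previous step yields
\begin{eqnarray*}
\|UVg - Tg\|_{T,1} \le 4\alpha_T(U,V)\|g\|_{T,\infty},
\end{eqnarray*}
which is the required inequality.

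There is no real obstacle here; the only subtlety is recognising that compatibility forces $R(T) \subseteq R(V)$, which is what guarantees both that $Vg$ inherits an $R(T)$-bound from $g$ (so that the hypotheses of Theorem~\ref{alpha mixing inequality lemma} are met) and that $T(Vg)$ simplifies to $Tg$. Everything else is a one-line substitution.
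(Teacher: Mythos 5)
Your proof is correct and follows exactly the route the paper intends: set $f = Vg$ in Theorem~\ref{alpha mixing inequality lemma}, use compatibility ($TV=T$) to reduce $T(Vg)$ to $Tg$, and invoke Theorem~\ref{jensen's inequality} to bound $\|Vg\|_{T,\infty}$ by $\|g\|_{T,\infty}$. The paper states this in a single sentence before the corollary; you have merely supplied the (correct) verification that $Vg \in R(V)\cap L^\infty(T)$, which is a worthwhile detail to make explicit.
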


The next theorem, see \cite{serfling} for the measure theoretic case, arises using a similar procedure as the above for theorem but we now proceed from (\ref{uni-1})
and the definition of $\varphi_T(U,V)$ as follows:
 \begin{eqnarray*}
 |UQ_{i,n}^\pm g - TQ_{i,n}^\pm g|=|g\cdot(UQ_{i,n}^\pm e - TQ_{i,n}^\pm e)|\le g\cdot\varphi_T(U,V).
 \end{eqnarray*}
Here
 $$|Uf_{n}^\pm - Tf_{n}^\pm|
 \le \sum_{i=1}^{N_n^\pm} \beta_{i,n}^\pm |UQ_{i,n}^\pm g - TQ_{i,n}^\pm g|
 \le \sum_{i=1}^{N_n^\pm} \beta_{i,n}^\pm \varphi_T(U,V) \cdot g\le \varphi_T(U,V) \cdot g.$$
Letting $n\to\infty$ gives
 $$|Uf^\pm - Tf^\pm|
\le \varphi_T(U,V) \cdot g$$
and thus 
 $$\|Uf - Tf\|_{T,\infty}
\le 2\varphi_T(U,V) \cdot \|f\|_{T,\infty}$$
from which the following theorem follows.

\begin{thm}
\label{phi-inequality}
Let $E$ be a $T$-universally complete Riesz space with weak order unit $e=Te$, where $T$ is a conditional expectation operator on $E$. Let $U$ and $V$ be conditional expectation operators on $E$ compatible with $T$. Then for $f \in L^{\infty}(T) \cap R(V)$
\begin{eqnarray*}
\|Uf - Tf\|_{T,1}  \le \|Uf - Tf\|_{T,\infty}  \le 2 \varphi_T(U,V) \|f\|_{T,\infty}.
\end{eqnarray*}
\end{thm}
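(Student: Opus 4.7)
The first inequality $\|Uf-Tf\|_{T,1}\le \|Uf-Tf\|_{T,\infty}$ is immediate from Corollary \ref{baw-norm-inequality} applied to $Uf-Tf\in L^\infty(T)$, so the real content is the second inequality. My plan is to follow the architecture of the proof of Theorem \ref{alpha mixing inequality lemma}, replacing the step where the averaging identity (\ref{uni-1}) is fed into the bound $T|UQe-TQe|\le 2\alpha_T(U,V)$ by a step that feeds it into the pointwise bound $|UQe-TQe|\le \varphi_T(U,V)$ coming straight from Definition \ref{def-uniform}. Since we are working with the $R(T)$-valued $\|\cdot\|_{T,\infty}$ rather than $\|\cdot\|_{T,1}$, there is no outer $T$ to apply, which is what allows the factor $2$ (rather than $4$) to appear in the final bound.

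Concretely, I would set $g:=\|f\|_{T,\infty}\in R(T)_+$, so that $|f|\le g$, and invoke \cite[Theorem~4.2]{conditional} to produce increasing sequences $f_n^\pm\uparrow f^\pm$ in $R(V)$ of the form $f_n^\pm=\sum_{i=1}^{N_n^\pm}\beta_{i,n}^\pm Q_{i,n}^\pm g$, with $Q_{i,n}^\pm e\in R(V)$, $\beta_{i,n}^\pm>0$ and $\sum_i\beta_{i,n}^\pm\le 1$, exactly as in the earlier proof. Using that both $U$ and $T$ are averaging operators and that $g\in R(T)\subset R(U)$, identity (\ref{uni-1}) gives
\begin{eqnarray*}
|UQ_{i,n}^\pm g-TQ_{i,n}^\pm g|=g\cdot|UQ_{i,n}^\pm e-TQ_{i,n}^\pm e|\le g\cdot\varphi_T(U,V),
\end{eqnarray*}
where the inequality is by Definition \ref{def-uniform}, since $Q_{i,n}^\pm\in\mathcal{B}(V)$.

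Next I would sum, using the triangle inequality and $\sum_i\beta_{i,n}^\pm\le 1$, to obtain $|Uf_n^\pm-Tf_n^\pm|\le \varphi_T(U,V)\cdot g$. Order continuity of $U$ and $T$ lets me pass $n\to\infty$ to get $|Uf^\pm-Tf^\pm|\le \varphi_T(U,V)\cdot g$, and then $f=f^+-f^-$ and the triangle inequality give $|Uf-Tf|\le 2\varphi_T(U,V)\cdot g$. Since $\varphi_T(U,V)\cdot g\in R(T)_+$, the definition of $\|\cdot\|_{T,\infty}$ yields $\|Uf-Tf\|_{T,\infty}\le 2\varphi_T(U,V)\|f\|_{T,\infty}$, which combined with Corollary \ref{baw-norm-inequality} completes the proof.

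I do not anticipate a serious obstacle: the whole argument is a straightforward adaptation of the preceding $\alpha$-mixing proof. The only subtlety worth double-checking is that the freezing identity (\ref{uni-1}) really applies to each summand $Q_{i,n}^\pm g$, which requires $g\in R(T)\subset R(U)$ (true by construction) together with the averaging property from Theorem \ref{R(T) x R(T) = R(T)}; once this is in place, the passage to the $\|\cdot\|_{T,\infty}$ norm avoids the outer $T$ and hence the doubling step that produced the factor $4$ in Theorem \ref{alpha mixing inequality lemma}.
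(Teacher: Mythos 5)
Your proposal is correct and is essentially identical to the paper's own argument: the paper likewise derives the second inequality by rerunning the decomposition $f_n^{\pm}=\sum_i \beta_{i,n}^{\pm}Q_{i,n}^{\pm}g$ from Theorem \ref{alpha mixing inequality lemma}, feeding (\ref{uni-1}) into the bound $|UQ_{i,n}^{\pm}e-TQ_{i,n}^{\pm}e|\le\varphi_T(U,V)$ from Definition \ref{def-uniform}, summing with $\sum_i\beta_{i,n}^{\pm}\le 1$, passing to the order limit, and splitting $f=f^{+}-f^{-}$ to obtain the factor $2$, with the first inequality supplied by Corollary \ref{baw-norm-inequality}. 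No gaps.
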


Applying Theorem \ref{phi-inequality} to probability spaces we have the following corollary.
\begin{cor}
Let $(\Omega,\mathcal{F},\mu)$ be a probability space,
$\mathcal{C},\mathcal{G}, \mathcal{H}$ be sub-$\sigma$-algebras of $\mathcal{F}$ with $\mathcal{G}$ and  $\mathcal{H}$ containing $\mathcal{C}$.
For $f\in L^0(\Omega,\mathcal{H},\mu)$ with $|f|$ bounded by $g\in  L^0(\Omega,\mathcal{C},\mu)$ we have
\begin{eqnarray*}
|\mathbb{E}[f|\mathcal{G}] - \mathbb{E}[f|\mathcal{C}]| 
\le 2 \varphi_\mathcal{C}(\mathcal{G},\mathcal{H}) g.
\end{eqnarray*}
\end{cor}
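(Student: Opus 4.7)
The plan is to realise the measure-theoretic setup as a specialisation of the Riesz space framework and then invoke Theorem~\ref{phi-inequality} directly, with no further estimation required. I would first take $E=L^1(T)$ with $T=\mathbb{E}[\,\cdot\,|\mathcal{C}]$ and weak order unit $e=\mathbb{I}_\Omega$ (so $Te=e$), and set $U=\mathbb{E}[\,\cdot\,|\mathcal{G}]$, $V=\mathbb{E}[\,\cdot\,|\mathcal{H}]$. The hypotheses $\mathcal{C}\subseteq\mathcal{G}$ and $\mathcal{C}\subseteq\mathcal{H}$ give, via the tower property, the compatibility relations $TU=UT=T$ and $TV=VT=T$ needed by the theorem.

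Next I would unpack the Riesz-space objects. The band projections on $E$ are multiplications by indicator functions, and $P\in\mathcal{B}(V)$ exactly when $Pe=\mathbb{I}_B\in R(V)=L^0(\Omega,\mathcal{H},\mu)$, that is, when $B\in\mathcal{H}$; analogously for $\mathcal{B}(U)$. The $R(T)$-valued norm $\|\cdot\|_{T,\infty}$ coincides, by its infimum definition in $R(T)_+=L^0(\Omega,\mathcal{C},\mu)_+$, with the $\mathcal{C}$-conditional essential supremum. Definition~\ref{def-uniform} therefore reads
\begin{eqnarray*}
\varphi_T(U,V)=\sup_{B\in\mathcal{H}}\bigl\|\mathbb{E}[\mathbb{I}_B|\mathcal{G}]-\mathbb{E}[\mathbb{I}_B|\mathcal{C}]\bigr\|_{T,\infty},
\end{eqnarray*}
which is exactly the conditional analogue of Lemma~\ref{classical-uniform} and is what is meant by $\varphi_\mathcal{C}(\mathcal{G},\mathcal{H})$.

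Under this dictionary the hypotheses transfer cleanly: $f\in L^0(\Omega,\mathcal{H},\mu)$ places $f\in R(V)$, while $|f|\le g\in L^0(\Omega,\mathcal{C},\mu)=R(T)_+$ places $f\in L^\infty(T)$ with $\|f\|_{T,\infty}\le g$, since $g$ is an admissible competitor in the defining infimum. Theorem~\ref{phi-inequality} now applies verbatim and yields $\|Uf-Tf\|_{T,\infty}\le 2\varphi_T(U,V)\|f\|_{T,\infty}$. Finally, the infimum characterisation of $\|\cdot\|_{T,\infty}$ gives the pointwise bound $|Uf-Tf|\le \|Uf-Tf\|_{T,\infty}$, and chaining these inequalities produces
\begin{eqnarray*}
\bigl|\mathbb{E}[f|\mathcal{G}]-\mathbb{E}[f|\mathcal{C}]\bigr|\le 2\,\varphi_\mathcal{C}(\mathcal{G},\mathcal{H})\,g
\end{eqnarray*}
almost surely, as claimed.

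The only genuine obstacle is not analytic but notational: one must carefully justify the identification $\varphi_T(U,V)=\varphi_\mathcal{C}(\mathcal{G},\mathcal{H})$, since the paper defines the conditional uniform coefficient only implicitly (via the translation from the Riesz setting together with the flavour suggested by (\ref{conditioned}) and Lemma~\ref{classical-uniform}). Once one accepts this as the definition of $\varphi_\mathcal{C}(\mathcal{G},\mathcal{H})$, together with the routine identifications of $R(T)$, $\mathcal{B}(V)$ and $\|\cdot\|_{T,\infty}$ above, the corollary is an immediate specialisation and requires no further work.
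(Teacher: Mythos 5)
Your proposal is correct and takes essentially the same approach as the paper: the paper obtains this corollary precisely by specialising Theorem~\ref{phi-inequality} to $E=L^1(T)$ with $T=\mathbb{E}[\,\cdot\,|\mathcal{C}]$, $U=\mathbb{E}[\,\cdot\,|\mathcal{G}]$, $V=\mathbb{E}[\,\cdot\,|\mathcal{H}]$, exactly as you do. Your explicit verification of the dictionary (band projections as indicator multiplications, $\|\cdot\|_{T,\infty}$ as the $\mathcal{C}$-conditional essential supremum, $f\in R(V)\cap L^\infty(T)$ with $\|f\|_{T,\infty}\le g$, and the pointwise bound $|Uf-Tf|\le\|Uf-Tf\|_{T,\infty}$) simply spells out the identifications the paper leaves implicit.
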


\section{Mixing for $\sigma$-finite processes}

In this section we consider the simplest non-trivial application, that is,
to $\sigma$-finite processes,
hence giving a theory of conditional mixing for such processes.
In this concrete example the  spaces and operators can be clearly identified.
A consideration of $\sigma$-finite processes in the context of martingale theory can be found in
the work of Dellacherie and Meyer, \cite[Sections 39, 42 and 43]{D-M-B}. 

Let $(\Omega, \mathcal{A},\mu)$ be a $\sigma$-finite measure space, which to be interesting should have $\mu(\Omega)=\infty$, and let $(\Omega_i)_{i\in\N}$ be a $\mu$-measurable partition of $\Omega$ into sets of finite positive measure.
Let $\mathcal{A}_0$ be the sub-$\sigma$-algebra of $\mathcal{A}$ generated by $(\Omega_i)_{i\in\N}$. We take the Riesz space $E=L^\infty(\Omega, \mathcal{A},\mu)$ and
the conditional expectation operator $T=\mathbb{E}[\,\cdot\,|\mathcal{A}_0]$.
For $f\in E$ we have 
\begin{eqnarray}
Tf(\omega)=\frac{\int_{\Omega_i}f\,d\mu}{\mu(\Omega_i)},\quad\mbox{for}\quad \omega\in \Omega_i.\label{app-1}
\end{eqnarray}
Here we have that the universal completion, $E^u$, of $E$ is the space of all $\mathcal{A}$-measurable functions. The $T$-universal completion of $E$ is the space
$$\mathcal{L}^1(T)=\left\{f\in E^u\,\left|\,\int_{\Omega_i}|f|\,d\mu<\infty \mbox{ for all } i\in\N\right\},\right.$$
which is characterized by $f|_{\Omega_i}\in L^1(\Omega, \mathcal{A},\mu)$, for each $i\in \N$. Here $T$ 
can be extended to an $\mathcal{L}^1(T)$ conditional expectation operator as per (\ref{app-1}).
We note that $E$ has weak order unit $e=1$, the function identically $1$ on $\Omega$, which again is a weak order unit for  $\mathcal{L}^1(T)$, but is not in $L^1(\Omega, \mathcal{A},\mu)$. The range of the generalized conditional expectation operator $T$ is
$$R(T)=\left\{f\in E^u\,\left|\,f \mbox{ a.e. constant on } \Omega_i, i\in\N\right\},\right.$$
which is an $f$-algebra. The last of the spaces to be considered is
$$\mathcal{L}^\infty(T)=\left\{f\in E^u\,\left|\,f \mbox{ essentially bounded on } \Omega_i \mbox{ for each } i\in\N\right\}.\right.$$
The vector norms on $\mathcal{L}^1(T)$ and $\mathcal{L}^\infty(T)$ are
\begin{eqnarray}
 \|f\|_1(\omega)&=&T|f|(\omega)=\frac{\int_{\Omega_i}|f|\,d\mu}{\mu(\Omega_i)},\quad\mbox{for}\quad \omega\in \Omega_i, f\in \mathcal{L}^1(T),\label{app-2}\\
 \|f\|_\infty(\omega)&=&\mbox{ess sup}_{\Omega_i} |f|,\quad\mbox{for}\quad \omega\in \Omega_i, f\in \mathcal{L}^\infty(T).\label{app-3}
\end{eqnarray}
We note that $L^1(\Omega, \mathcal{A},\mu)\subsetneq \mathcal{L}^1(T)$,
$L^\infty(\Omega, \mathcal{A},\mu)\subsetneq \mathcal{L}^\infty(T)$,
$\mathcal{L}^\infty(T)\subset \mathcal{L}^1(T)$ while $L^\infty(\Omega, \mathcal{A},\mu)
\not\subset L^1(\Omega, \mathcal{A},\mu)$.

Let $\mathcal{C}$ and $\mathcal{D}$ be sub-$\sigma$-algebras of $\mathcal{A}$ which contain
$\mathcal{A}_0$. The $\alpha$-mixing coefficient of $\mathcal{C}$ and $\mathcal{D}$ conditioned on $\mathcal{A}_0$ (which in measure theoretic terms could be denote
$\alpha_{\mathcal{A}_0}(\mathcal{C},\mathcal{D})$  is $\alpha_T(U,V)$.
Here $U$ and $V$ are the restrictions to $\mathcal{L}^1(T)$ of the extensions to 
$\mathcal{L}^1(U)$ and $\mathcal{L}^1(V)$ respectively of the conditional expectation operators $U$ and $V$ on $E$ conditioning 
with respect to the $\sigma$-algebras $\mathcal{C}$ and $\mathcal{D}$. In this example case these operators can be given explicitly by
\begin{eqnarray}
U(f)&=&\sum_{i=1}^\infty \mathbb{E}_i[f\mathbb{I}_{\Omega_i}|\mathcal{C}],\\
V(f)&=&\sum_{i=1}^\infty \mathbb{E}_i[f\mathbb{I}_{\Omega_i}|\mathcal{D}],
\end{eqnarray}
for $f\in  \mathcal{L}^1(T)$. 
Here the conditional expectation 
$\mathbb{E}_i[f\mathbb{I}_{\Omega_i}|\mathcal{C}]=\mathbb{E}_i[f|\mathcal{C}]$ is the conditional
expectation on $\Omega_i$ of $f|_{\Omega_i}$ with respect to the
probability measure $\mu_i(A):=\frac{\mu(A\cap\Omega_i)}{\mu(\Omega_i)}$ 
and the $\sigma$-algebra $\{C\cap \Omega_i|C\in \mathcal{C}\}$, and similarly
for $\mathcal{C}$ replaced by $\mathcal{D}$.
Since the structure of the example is extremely simple, an explicit computation can be carried out to give
\begin{eqnarray*}
\alpha_T(U,V)&=&\alpha_{\mathcal{A}_0}(\mathcal{C},\mathcal{D})\\
&=&\sum_{i=1}^\infty\mathbb{I}_{\Omega_i}\sup\left.\left\{\left|\mu_i(C\cap D)
-\mu_i(C)\mu_i(D)\right|\,\right|\,C\in \mathcal{C}, D\in\mathcal{D}\right\}\\
&=&\sum_{i=1}^\infty
\alpha_i(\mathcal{C},\mathcal{D})\mathbb{I}_{\Omega_i},
\end{eqnarray*}
where $\alpha_i(\mathcal{C},\mathcal{D})$ is the $\alpha$-mixing coefficient of
$\sigma$-algebras $\mathcal{C}$ and $\mathcal{D}$ with respect to the 
probability measure $\mu_i$.
Corollary \ref{general-inequality} gives that if $g$ is $\mu$-measurable and
essential bounded on each $\Omega_i, i\in \N$, then
\begin{eqnarray*}
\|UVg - Tg \|_{T,1}  \le 4 \alpha_T(U,V) \|g\|_{T,\infty},
\end{eqnarray*}
which in this example case can be written as
\begin{eqnarray*}
{\frac{1}{\mu(\Omega_i)}\int_{\Omega_i}\left|\mathbb{E}_i[\mathbb{E}_i[g|\mathcal{D}]|\mathcal{C}] - \frac{1}{\mu(\Omega_i)}\int_{\Omega_i}g\,d\mu \right| d\mu}\,  \le\, 4\alpha_i(\mathcal{C},\mathcal{D})  \mbox{ess sup}_{\Omega_i} |g| 
\end{eqnarray*}
for each $i\in\N$.
The conditional uniform mixing coefficient is given by
\begin{eqnarray*}
\varphi_T(U,V) &=&\varphi_{\mathcal{A}_0}(\mathcal{C},\mathcal{D})\\
&=&\sum_{i=1}^\infty\mathbb{I}_{\Omega_i}\sup_{D\in \mathcal{D}}\mbox{ess sup}_{\Omega_i} \left|\mathbb{E}_i[\mathbb{I}_{D}|\mathcal{C}]-\mu_i(D)\right|\\
&=&\sum_{i=1}^\infty
\varphi_i(\mathcal{C},\mathcal{D})\mathbb{I}_{\Omega_i},
\end{eqnarray*}
where $\varphi_i(\mathcal{C},\mathcal{D})$ is the $\varphi$-mixing coefficient of 
$\mathcal{C}$ and $\mathcal{D}$ relative to the probability measure $\mu_i$.
For $g$ as above, Theorem \ref{phi-inequality} gives
\begin{eqnarray*}
\|UVg - Tg\|_{T,\infty}  \le 2 \varphi_T(U,V) \|g\|_{T,\infty},
\end{eqnarray*}
which in the special case under consideration yields
\begin{eqnarray*}
\left|\mathbb{E}_i[\mathbb{E}_i[g|\mathcal{D}]|\mathcal{C}] - \frac{1}{\mu(\Omega_i)}\int_{\Omega_i}g\,d\mu \right| 
\le 2\varphi_i(\mathcal{C},\mathcal{D}) \mbox{ ess sup }_{\Omega_i} |g| 
\end{eqnarray*}
a.e. on $\Omega_i$, for each $i\in\N$.

It should 
be noted that the work presented here also applies to processes where the random variables are Riesz space valued, say $L^p$, and the conditional expectation, $T$, is generated by an arbitrary sub-$\sigma$-algebra of $\mathcal{A}$. In this case we obtain a generalization of 
mixing to the context of vector measure.


\end{document}